\author{Daniel R. Johnston\footnote{Supported by an Australian Mathematical Society Lift-off
Fellowship.}\\
Max Planck Institute for Mathematics, Bonn, Germany \\ johnston@mpim-bonn.mpg.de \and 
Tim Trudgian\footnote{Supported by Australian Research Council Discovery Project DP240100186.} \\
School of Science, UNSW Canberra, Australia \\
timothy.trudgian@unsw.edu.au}
\title{A round of Pintz to celebrate oscillations in sums}
\newtheorem{thm}{Theorem}
\newtheorem{Lem}{Lemma}
\theoremstyle{definition}
\newtheorem*{remark}{Remark}
\begin{document}
\maketitle
\textit{Dedicated to J\'{a}nos Pintz on the occasion of his diamond jubilee}
\begin{abstract}
\noindent
    We explore a method, going back to Landau and developed by Pintz, for connecting sums of arithmetic functions with zero-free regions for $L$-functions. In particular, we make explicit a general result of Pintz of this form, showing how one can use arithmetical information to deduce information about zeroes of $L$-functions, rather than the other way around. As a prototype, we work through an example with the Riemann zeta-function and sums of the M\"obius function, but we also outline the utility of this method in general.
\end{abstract}
\section{Introduction and background}\label{leubald}
\noindent
The prime number theorem asserts that $\psi(x) \sim x$, where $\psi(x) = \sum_{n\leq x}\Lambda(n) =\sum_{p^{m}\leq x} \log p$.    One is often interested in the error of such approximations. To this end, define 
\begin{equation*}
    \Delta^{\psi}(x) := \psi(x) - x.
\end{equation*}
What can be said about the order of $\Delta^{\psi}(x)$? If there are no zeroes of $\zeta(s)$ to the right of some line
$\sigma = A<1$ then it is relatively easy to see that $\Delta^{\psi}(x) \ll x^{A + \epsilon}$. Hence, on the Riemann hypothesis (RH) we have $\Delta^{\psi}(x) \ll x^{1/2 + \epsilon}$. Up to the $\epsilon$ and implied constants this is best possible, since we also have $\Delta^{\psi}(x) = \Omega(\sqrt{x})$. 

This comes from following a standard recipe with these ingredients:
\begin{enumerate}
\item The coefficients in the Dirichlet series of $-\zeta'/\zeta(s)$ are $\Lambda(n)$.
\item The function $-\zeta'/\zeta(s)$ has poles at the zeroes of $\zeta(s)$.
\item There \textit{are} zeroes of $\zeta(s)$ with $\sigma = 1/2$.
\end{enumerate}
Landau extended this method (see, e.g., \cite{Pintz22}) to show that $\Delta^{\psi}(x)$ has oscillations depending on the real part of any zero of $\zeta(s)$. If we take $\rho_{0} = \beta_{0} + i\gamma_{0}$ as any complex zero of $\zeta(s)$, then it follows that $\Delta^{\psi}(x) = \Omega(x^{\beta_{0} - \epsilon}).$
Effective versions of Landau's result are obtainable via Tur\'{a}n's power sum method. While many authors have been involved in such estimates, we wish to focus on the framework developed\footnote{We take this moment to list some of Pintz's contributions that have since been made explicit for applications. These include work on the size of the error term in the prime number theorem \cite{PintzIng} (\cite{PTPNT,DJ4,Fiori,DJ3,Bellotti}), the mean-square of the error term \cite{PintzXY,Pintz85} (\cite{Brent}), weighted averages of the error term \cite{Pintz91} (\cite{DJ2}), and real zeroes for Dirichlet $L$-functions \cite{Pintz77} (\cite{Morrill}).} by Pintz \cite{Pintz80,Pintz80b,Pintz80c,Pintz82}. In \cite[Cor.\ 2.1]{Pintz22} it is shown that if $\rho_{0}$ is a zero with multiplicity $\nu$, then\footnote{In the ``annotated bibliography for comparative prime number
theory" (ABCPNT) \cite{ABCPNT}, the notation ${\mathfrak A}^\psi_{|1|}(Y)$ is suggested in place of $D^{\psi}(Y)$. However, here we have chosen to keep our notation more in line with that of Pintz's work.}
\begin{equation}\label{kitchen}
D^{\psi}(Y):= \frac{1}{Y} \int_{1}^{Y} |\Delta^{\psi}(x)|\, \mathrm{d}x \gg \frac{|\zeta^{(\nu)}(\rho_{0})|}{(\nu -1)! (1 + |\gamma_{0}|)^{5/2}}Y^{\beta_{0}}.
\end{equation}
Choosing a zero, for example, the lowest zero with positive ordinate, gives $\beta_{0} = 1/2$ and $\gamma_{0} = 14.134\ldots$. Hence (\ref{kitchen}) shows that  $D^{\psi}(Y)\gg Y^{1/2}$, and thus $\Delta^{\psi}(x) = \Omega(x^{1/2})$.

As shown in Pintz's work, this framework can be extended to other arithmetical functions appearing in analytic number theory. Notably, if $\mu(n)$ denotes the M\"obius function and ${M(x)=\sum_{n\leq x} \mu(n)}$, then the same approach gives that
\begin{equation}\label{kitchen2}
    D^{M}(Y)= \frac{1}{Y}\int_1^Y|M(x)|\mathrm{d}x\gg\frac{1}{(\nu -1)! (1 + |\gamma_{0}|)^{5/2}}Y^{\beta_{0}}(\log Y)^{\nu-1}.
\end{equation}

Compared to \eqref{kitchen}, the inequality \eqref{kitchen2} has the benefit of not depending on $\zeta^{(\nu)}(\rho_0)$. 

Now, once the implied constants in (\ref{kitchen2}) are known, one has an explicit lower bound on $D^{M}(Y)$. Suppose for example that $\nu=1$ so $D^{M}(Y) \geq c(\rho_{0}) Y^{\beta_{0}}$ for some calculable $c$. 
Suppose then that 
\begin{equation}\label{study}
|M(x)|\leq d\sqrt{x}, \quad (1\leq x \leq Y).
\end{equation}
It then follows trivially that $D^{M}(Y)Y^{-1/2}\leq 2d/3$. This then shows that
\begin{equation}\label{lounge}
Y^{\beta_0-1/2} \leq \frac{2d}{3c} \Rightarrow \beta_0 \leq 1/2+ \frac{\log (1/c) + \log (2d/3)}{\log Y}.
\end{equation}
Therefore, if we have verified (\ref{study}) for a large $Y$, we have a  zero-free region for simple zeroes of $\zeta(s)$. Almost certainly $c$ decreases with $\gamma_0$, so that the right side of (\ref{lounge}) increases with $\gamma_0$. Nevertheless, this zero-free region is non-trivial when $\beta_0<1$. It is `even more non-trivial' if it beats existing zero-free regions up to some finite height.

In this paper, we explore this concept in detail, giving an explicit version of \eqref{kitchen2} for simple zeroes of $\zeta(s)$. However, at the outset we concede that this approach, when applied to $\zeta(s)$, is doomed to fail: there is no chance that we will improve our knowledge of zero-free regions. This is due to the fact that the Riemann hypothesis has been verified to a large height \cite{PTRH} and also because a lot of work has been done on explicit versions of known zero-free regions (\cite{Kadiri,HoffTrudgian,MTY}). Nevertheless, with an explicit version of \eqref{kitchen2}, one can get an idea for how much information on $\zeta(s)$ would be obtainable if strong bounds on $M(x)$ were known. Moreover, one can also obtain explicit lower bounds for $D^M(Y)$, which seems to be closer to the original intention of Pintz's work.

We also express our results more generally, so that in future work one may also explore these ideas with more abstract $L$-functions. This is important, because even upon considering primes in arithmetic progressions, far less is known about the corresponding Dirichlet $L$-functions. In particular, the zero-free regions are not as sharp; our best numerical verifications of the Generalised Riemann hypothesis (GRH) are only to small heights when the conductors of the characters are large \cite{Platt,PTFuture}; finally, potential exceptional zeroes of real Dirichlet $L$-functions must be simple. For example, take $\chi$ mod $q$ for $q\approx 10^{20}$. We do not know that GRH is true for such characters up to height $t=1$. Then, as one considers more complicated $L$-functions, less and less is known about their zeroes. 

The outline of this paper is as follows. In Section \ref{hammerhead} we make explicit a general result from Pintz's work \cite{Pintz22} for simple zeroes, which includes (\ref{kitchen}) and \eqref{kitchen2} as specimens. In Section~\ref{nurse} we explore the explicit example \eqref{kitchen2} concerning $\zeta(s)$ and sums of the M\"{o}bius function $\mu(n)$. Finally, in Section~\ref{gws} we further discuss possible extensions of our work.

\section{Main theorem}\label{hammerhead}
In this section we prove our main result, which is an explicit version of the main theorem in \cite{Pintz22} for the case of simple zeroes ($\nu=1$). Our argument could certainly be extended to higher order zeroes, but we found the numerics to be poor. The case of simple zeroes is also by and large the most studied in the literature.

Our notation is essentially the same as Pintz's, but we use more parameters to give an explicit result. We have also modified the bound for $|F(s)|$ in \eqref{FGbounds} to give a better result for large imaginary parts. In what follows we will always express a complex number $s$ as $s=\sigma+it$ with $\sigma=\Re(s)$ and $t=\Im(s)$.

\begin{thm}[Explicit version of the Landau--Pintz result]\label{mainthm}
    Suppose that $A(x):\mathbb{R}\to\mathbb{C}$ is a complex valued function with $|A(x)|\leq c_Ax^C$ for some $c_A>0$ and $C> 0$, with
    \begin{equation*}
        \int_1^\infty\frac{A(x)}{x^{s+1}}\mathrm{d}x=\frac{F(s)}{G(s)},
    \end{equation*}
    where for some $\beta_0\in(0,1]$ and $0<c_0<\beta_0$, we have that $F(s)$ is analytic for $\sigma\geq\beta_0-c_0$, and $G(s)$ for $\sigma\geq -1$. Next, suppose that in these halfplanes, one has
    \begin{equation}\label{FGbounds}
        |F(s)|\leq c_F\max(1,|t|^{B_F})e^\sigma\quad |G(s)|\leq c_G\max(1,|t|^{B_G})e^{|\sigma|},
    \end{equation}
    for constants $c_F,B_F,c_G,B_G>0$. Then, if $G(s)$ has a simple zero $\rho_0=\beta_0+i\gamma_0$ and $F(\rho_0)\neq 0$, one has, for all $Y>e^{2C+4}$
    \begin{equation*}\label{maineq}
        \frac{1}{Y}\int_1^{Y}|A(x)|\mathrm{d}x\geq\frac{1}{e^{C+2}(1+|\gamma_0|)^{B_G} D_2}\left(\frac{|F(\rho_0)|\widetilde{Y}^{\beta_0}}{(\beta_0+2)^{B_G+2}}-\mathcal{E}(\widetilde{Y})\right)-\frac{c_AD_1(\log\widetilde{Y})e^{(C+1)(C+2)}}{(\log\widetilde{Y}-C-1)D_2},
    \end{equation*}
    where $\widetilde{Y}:=Ye^{-(C+2)}$,
    \begin{align}
        \mathcal{E}(y)&:=c_F\frac{e^{\beta_0-c_0}}{2\pi}e^{4/\log y}y^{\beta_0-c_0}\int_{-\infty}^{\infty}\frac{e^{-t^2/\log y}\max(1,|t|^{B_F})}{|c_0+it||2+\beta_0-c_0+it|^{B_G+2}}\mathrm{d}t,\label{mcEdef}\\
        D_1(y)&:=\frac{c_G}{\pi e}\left(\frac{1}{2}+\frac{1}{(y-2)(y+1)^{B_G+2}}\right),\label{D1def}\\
        D_2&:=c_G\frac{e}{\pi}\left(\frac{1-2^{-B_G-1/2}}{B_G+1}+\frac{1}{(1+\beta_0)}\right)\label{D2def}.
    \end{align}
\end{thm}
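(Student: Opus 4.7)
The strategy is a Mellin-inversion / residue argument in the Landau--Pintz tradition. I would introduce the integral
\[
I(Y):=\frac{1}{2\pi i}\int_{(C+2)}\frac{F(s)}{G(s)}\,K(s)\,\widetilde{Y}^{\,s}\,\dif s,
\]
where the line of integration is $\sigma=C+2$ (to the right of the abscissa of absolute convergence implied by $|A(x)|\leq c_Ax^C$), and $K(s)$ is a kernel built to have (i) no poles in $\sigma\geq\beta_0-c_0$, (ii) rapid Gaussian decay in $|t|$, and (iii) an explicit value at $s=\rho_0$. The natural choice is
\[
K(s)=\frac{e^{(s-\rho_0)^2/\log\widetilde{Y}}}{(s+2)^{B_G+2}},
\]
whose denominator forces absolute convergence against the polynomial-in-$|t|$ bounds \eqref{FGbounds}, and whose numerator both centres the Gaussian at $\rho_0$ and is innocuous at $\rho_0$. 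The choice $\widetilde{Y}=Ye^{-(C+2)}$ and the hypothesis $Y>e^{2C+4}$ are made precisely so that $\log\widetilde{Y}>C+2>0$, which is what makes the Gaussian effective and keeps $(\log\widetilde{Y}-C-1)$ safely positive in the tail bound.

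I would then estimate $I(Y)$ in two different ways. First, swapping the order of integration (permitted on the line $\sigma=C+2$) gives $I(Y)=\int_1^\infty A(x)\Phi(x)\,\dif x$, where $\Phi$ is the inverse Mellin transform of $K(s)\widetilde{Y}^{\,s}$. Splitting at $x=Y$, the piece with $x\le Y$ contributes at most $D(Y)\cdot Y\cdot\sup_{x\le Y}|\Phi(x)|$, and the piece with $x>Y$ is controlled by $|A(x)|\le c_Ax^C$; these produce the two terms on the right-hand side, with the suprema giving $D_2$ and $D_1$ respectively after explicit contour computations. Secondly, I would shift the line of integration leftward to $\sigma=\beta_0-c_0$; the Gaussian damping kills the horizontal contributions at $\pm i\infty$ in spite of only polynomial growth of $F,G$. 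This sweep picks up the residue at the simple pole $s=\rho_0$, namely $F(\rho_0)K(\rho_0)\widetilde{Y}^{\rho_0}/G'(\rho_0)$, whose modulus is at least
\[
\frac{|F(\rho_0)|\widetilde{Y}^{\beta_0}}{|G'(\rho_0)|(\beta_0+2)^{B_G+2}}.
\]
The factor $e^{C+2}|1+\gamma_0|^{B_G}$ appearing in the theorem arises by bounding $|G'(\rho_0)|$ via a Cauchy estimate on a unit disc about $\rho_0$, using \eqref{FGbounds}. The integral on the shifted line $\sigma=\beta_0-c_0$ is bounded directly by $\mathcal{E}(\widetilde{Y})$: after the change of variable $t\mapsto t-\gamma_0$ to centre the Gaussian, the integrand is exactly what appears in \eqref{mcEdef}, and completing the square in the exponent produces the $e^{4/\log y}$ prefactor.

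Equating the two evaluations of $I(Y)$ and rearranging yields the stated inequality. I do not expect any single step to be conceptually difficult; the real obstacle is book-keeping. One must track the Gaussian width $1/\log\widetilde{Y}$, the shift distance $c_0$, the exponent $B_G+2$ of $(s+2)$, and the abscissa $C+2$ simultaneously, and verify that all multiplicative constants align to give the clean closed forms $\mathcal{E},D_1,D_2$ in \eqref{mcEdef}--\eqref{D2def}. The exponential $e^{(C+1)(C+2)}$ in the tail term comes from estimating $\int_Y^\infty x^C|\Phi(x)|\dif x$ and factoring out $\widetilde{Y}^{C+1}=Y^{C+1}e^{-(C+1)(C+2)}$; getting this factor sharp will require the most careful arithmetic. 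A minor subtlety is ensuring that, when applying the Cauchy estimate, the disc used to bound $|G'(\rho_0)|$ actually lies in the half-plane $\sigma\geq -1$ where \eqref{FGbounds} is valid --- which is automatic since $\beta_0>c_0$ gives $\beta_0-1\geq -1$.
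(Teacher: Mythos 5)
Your strategy follows the Landau--Pintz tradition in outline, but there is a structural defect that prevents it from reaching the stated theorem: you leave the factor $1/G(s)$ in the integrand. The paper instead absorbs $G$ into the kernel. Concretely, Pintz (and the authors) take
\[
g(s)=\frac{G(s-1+i\gamma_0)}{(s-1-\beta_0)(s+1)^{B_G+2}},
\]
which is \emph{analytic} at $s=1+\beta_0$ (the simple zero of $G(s-1+i\gamma_0)$ there cancels the explicit pole), and then multiply the Dirichlet series $F(s-1+i\gamma_0)/G(s-1+i\gamma_0)$ by $g(s)$. The $G$'s cancel, and what remains, $F(s-1+i\gamma_0)/\bigl((s-1-\beta_0)(s+1)^{B_G+2}\bigr)$, has exactly one simple pole in the strip swept by the contour shift, because $F$ is analytic there. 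Your $K(s)$ lacks this cancellation: $F(s)K(s)\widetilde{Y}^{\,s}/G(s)$ has a pole at \emph{every} zero of $G$ in $\beta_0-c_0<\sigma<C+2$. In the motivating example $G(s)=s(s-1)\zeta(s)$ there are infinitely many such zeros (all nontrivial zeros of $\zeta$ with real part exceeding $\beta_0-c_0$); your contour shift collects all those residues, and the Gaussian damping only controls the ones far from $\gamma_0$, not the (potentially many) that cluster near it. Relatedly, the claim that polynomial growth of $F$ and $G$ plus the Gaussian kills the horizontal segments is not sound, because it is $F/G$ that appears, and $F/G$ is unbounded near zeros of $G$; one cannot push horizontal segments through an arbitrary horizontal level without care. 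The paper's device sidesteps both problems at once: once $G$ is in the kernel's numerator, the integrand is analytic apart from the one manufactured pole, and the hypothesis $|G(s)|\le c_G\max(1,|t|^{B_G})e^{|\sigma|}$ is precisely what makes $r_\lambda$ controllable.

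Two further consequences of this divergence. First, your residue is $F(\rho_0)K(\rho_0)\widetilde{Y}^{\rho_0}/G'(\rho_0)$, so $|G'(\rho_0)|$ enters; the theorem has no such quantity, and the factor $|1+\gamma_0|^{B_G}$ in the statement actually comes from bounding $\sup_H|r_\lambda(H)|$ via $\max(1,|t+\gamma_0|^{B_G})\le|1+\gamma_0|^{B_G}\max(1,|t|^{B_G})$, not from a Cauchy estimate on $G'$. Second, your algebraic factor $(s+2)^{-(B_G+2)}$ is evaluated at $s\approx\rho_0$, where $|s+2|\asymp|\gamma_0|$; the paper instead works in the translated variable $s\mapsto s-1+i\gamma_0$, so its factor $(s+1)^{-(B_G+2)}$ is evaluated near $s=1+\beta_0$, giving the $\gamma_0$-free $(\beta_0+2)^{B_G+2}$ in the main term. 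Even if the unwanted residues could be controlled, your bookkeeping would not produce the stated constants $\mathcal{E}$, $D_1$, $D_2$.
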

\begin{proof}
    We let $Y>e^{C+2}$, noting that we rescale $Y$ by an additional factor of $e^{C+2}$ at the end of the proof. Following \cite[p.\ 61]{Pintz22}, we set $\lambda=\log Y$ and introduce the auxiliary functions
    \begin{align}
        g(s)&:=\frac{G(s-1+i\gamma_0)}{(s-1-\beta_0)(s+1)^{B_G+2}},\notag\\
        r_{\lambda}(H)&:=\frac{1}{2\pi i}\int\limits_{(3)}e^{s^2/\lambda+Hs}g(s)\mathrm{d}s,\label{rdef}
    \end{align}
    where, as usual, $\int_{(3)}$ indicates that we are integrating over the vertical line $\Re(s)=3$ on the complex plane. We then let
    \begin{align}
        U:&=\frac{1}{Y}\int_1^\infty\frac{A(x)}{x^{i\gamma_0}}r_{\lambda}(\lambda-\log x)\mathrm{d}x\notag\\
        &=\frac{1}{2\pi iY}\int\limits_{(3)}e^{s^2/\lambda+\lambda s}g(s)\int_1^\infty\frac{A(x)}{x^{s+i\gamma_0}}\mathrm{d}x\mathrm{d}s\notag\\
        &=\frac{1}{2\pi iY}\int\limits_{(3)}e^{s^2/\lambda+\lambda s}\frac{F(s-1+i\gamma_0)}{(s-1-\beta_0)(s+1)^{B_G+2}}\mathrm{d}s.\label{Udef}
    \end{align}
By splitting the range of integration and bounding the integrals trivially we have
    \begin{equation}\label{mainuidentity}
        |U|\leq\sup_{H}|r_{\lambda}(H)|\frac{1}{Y}\int_1^{e^{C+2}Y}|A(x)|\mathrm{d}x+\left|\frac{1}{Y}\int_{e^{C+2}Y}^\infty\frac{A(x)r_{\lambda}(\lambda-\log x)}{x^{i\gamma_0}}\mathrm{d}x\right|.
    \end{equation}
For further optimisation, one could split the integration at a slightly different point, say $e^{C+3}Y$. However, we found that such an optimisation made little difference to our final results.

We now use the relation \eqref{mainuidentity} to extract a lower bound for the mean-value of $|A(x)|$. The main term will arise via a lower bound for
\begin{equation*}
    \frac{|U|}{\sup_H|r_\lambda(H)|},
\end{equation*}
and we will show that the second term on the right-hand side of \eqref{mainuidentity} is relatively small. We begin by finding a lower bound for $|U|$. Letting $\sigma_0=1+\beta_0-c_0$, we shift the line of integration in \eqref{Udef} from (3) to ($\sigma_0$). That is, we write
    \begin{equation*}
        \int_{(3)}=\oint_{C}-\int_{3+i\infty}^{\sigma_0+i\infty}-\int_{\sigma_0+i\infty}^{\sigma_0-i\infty}-\int_{\sigma_0-i\infty}^{3-i\infty}=I_1-I_2-I_3-I_4,\quad\text{say,}
    \end{equation*}
    where $C$ is the rectangle with corners $\sigma_0\pm\infty$ and $3\pm i\infty$, traversed anticlockwise.
    Now, to begin with, the residue theorem gives
    \begin{align}\label{firstcontour}
        \frac{1}{2\pi i Y}\oint_Ce^{s^2/\lambda+\lambda s}\frac{F(s-1+i\gamma_0)}{(s-1-\beta_0)(s+1)^{B_G+2}}\mathrm{d}s&=e^{(1+\beta_0)^2/\log Y}\frac{Y^{\beta_0}F(\rho_0)}{(\beta_0+2)^{B_G+2}}.
    \end{align}
    Next, by the bound \eqref{FGbounds} for $F(s)$, one sees that the horizontal integrals, $I_2$ and $I_4$ are zero. For example, writing
    \begin{equation*}
        I_2=\lim_{T\to\infty}\int_3^{\sigma_0}e^{(\sigma+iT)^2/\lambda+\lambda(\sigma+iT)}\frac{F(\sigma-1+iT+i\gamma_0)}{|\sigma-1-\beta_0+iT||\sigma+1+iT|^{B_G+2}}\mathrm{d}\sigma,
    \end{equation*}
    we have by \eqref{FGbounds} and the fact that $\sigma\in[\sigma_0,3]$,
    \begin{align*}
        |I_2|&\leq\lim_{T\to\infty}\int_{\sigma_0}^3\left|e^{(\sigma^2-T^2)/\lambda+\lambda\sigma}\right|\frac{|F(\sigma-1+iT+i\gamma_0)|}{|\sigma-1-\beta_0+iT||\sigma+1+iT|^{B_G+2}}\mathrm{d}\sigma\\
        &\leq\lim_{T\to\infty} e^{9/\log Y} Y^3 e^{-T^2/\log Y}c_Fe^3\frac{|T|^{B_F}}{|T|^{B_G+3}}\int_{\sigma_0}^3\mathrm{d}\sigma\\
        &=0
    \end{align*}
    as claimed. An identical argument then gives $|I_4|=0$. For the remaining integral $I_3$, we write $s=\sigma_0+it$, so that
    \begin{align}\label{I3first}
        \left|\frac{1}{2\pi iY}I_3\right|&\leq \frac{1}{2\pi Y}\int_{-\infty}^{\infty}e^{(\sigma_0^2-t^2)/\lambda+\lambda\sigma_0}\frac{|F(\beta_0-c_0+it+i\gamma_0)|}{|-c_0+it||2+\beta_0-c_0+it|^{B_G+2}}\mathrm{d}t\notag\\
        &\leq c_F\frac{e^{\beta_0-c_0}}{2\pi}e^{4/\log Y}Y^{\beta_0-c_0}\int_{-\infty}^{\infty}\frac{e^{-t^2/\lambda}\max(1,|t|^{B_F})}{|c_0+it||2+\beta_0-c_0+it|^{B_G+2}}\mathrm{d}t,
    \end{align}
    where we have used that $\sigma_0^2\leq 4$ and the bound \eqref{FGbounds} for $F(s)$. Note that for standard choices of parameters, the integral in \eqref{I3first} rapidly converges and is readily computable (numerically) so we leave it as is. Combining \eqref{firstcontour} and \eqref{I3first} we thus have
    \begin{align}\label{Ufinalbound}
        |U|\geq\frac{Y^{\beta_0}|F(\rho_0)|}{(\beta_0+2)^{B_G+2}}-\mathcal{E}(Y),
    \end{align}
    with $\mathcal{E}(\cdot)$ as in \eqref{mcEdef}.
    
Following \cite[p.\ 62]{Pintz22}, we now obtain an upper bound for the second term in \eqref{mainuidentity}. This is done via an upper bound for $r_{\lambda}(H)$ that is useful when $H$ is negative. In particular, if in \eqref{rdef} we move the line of integration of $r_{\lambda}(H)$ from the line $(3)$ to $(\lambda)$, we obtain
    \begin{equation}\label{rlamfirst}
        |r_{\lambda}(H)|\leq\frac{1}{2\pi}\int_{-\infty}^\infty e^{\lambda(H+1)-t^2/\lambda}\frac{|G(\lambda-1+it+i\gamma_0)|}{|\lambda-1-\beta_0+it||\lambda+1+it|^{B_G+2}}\mathrm{d}t.
    \end{equation}
    Here, we remark that there are no additional terms to consider when moving the line of integration via a rectangular contour. This is because the integrand of $r_{\lambda}(H)$ is analytic for $\Re(s)\geq -1$ and, as before when bounding $|U|$, the horizontal components of such a contour vanish as a result of the bound \eqref{FGbounds} on $G(s)$. We bound \eqref{rlamfirst} further as
    \begin{align}
        |r_{\lambda}(H)|&\leq\frac{c_G}{2\pi}\int_{-\infty}^\infty e^{\lambda(H+1)}\frac{\max(1,|t+\gamma_0|^{B_G})e^{\lambda-1}}{|\lambda-2+it||\lambda+1+it|^{B_G+2}}\mathrm{d}t\notag\\
        &\leq c_G\frac{e^{\lambda(H+2)}(1+|\gamma_0|)^{B_G}}{2\pi e}\int_{-\infty}^\infty\frac{\max(1,|t|^{B_G})}{|\lambda-2+it||\lambda+1+it|^{B_G+2}}\mathrm{d}t,\label{rlambda2eq}
    \end{align}
    where in the first inequality we used the bound \eqref{FGbounds} on $G(s)$ and in the second we used that
    \begin{equation}\label{tg0eq}
        \max(1,|t+\gamma_0|^{B_G})\leq \max(1,\left(|t|+|\gamma_0|\right)^{B_G})\leq (1+|\gamma_0|)^{B_G}\max(1,|t|^{B_G}).    
    \end{equation} 
    We bound the integral in \eqref{rlambda2eq} by noting
    \begin{equation*}
        \int_0^1\frac{1}{|\lambda-2+it||\lambda+1+it|^{B_G+2}}\mathrm{d}t\leq\int_0^1\frac{1}{(\lambda-2)(\lambda+1)^{B_G+2}}\mathrm{d}t=\frac{1}{(\lambda-2)(\lambda+1)^{B_G+2}},
    \end{equation*}
    and
    \begin{equation*}
        \int_1^\infty\frac{t^{B_G}}{|\lambda-2+it||\lambda+1+it|^{B_G+2}}\mathrm{d}t\leq\int_1^\infty\frac{1}{t^3}\mathrm{d}t=\frac{1}{2}
    \end{equation*}
    so that
    \begin{equation}\label{rlambdafinal}
        |r_{\lambda}(H)|\leq D_1(\lambda)\cdot (1+|\gamma_0|)^{B_G}e^{\lambda(H+2)},
    \end{equation}
    where $D_1(\cdot)$ is as in \eqref{D1def}. From \eqref{rlambdafinal} and the bound $|A(x)|\leq c_Ax^C$ we therefore deduce
    \begin{align*}%\label{tailbound}
        \left|\frac{1}{Y}\int_{e^{C+2}Y}^\infty\frac{A(x)r_{\lambda}(\lambda-\log x)}{x^{i\gamma_0}}\mathrm{d}x\right|&\leq c_AD_1(\lambda)(1+|\gamma_0|)^{B_G}\int_{e^{C+2}Y}^\infty x^C e^{-\lambda(\log x-\lambda-1)}\mathrm{d}x\notag\\
        &=c_AD_1(\lambda)(1+|\gamma_0|)^{B_G}\int_{\lambda+C+2}^\infty e^{(C+1)y+\lambda+\lambda(\lambda-y)}\mathrm{d}y\notag\\
        &=c_AD_1(\lambda)(1+|\gamma_0|)^{B_G}e^{\lambda^2+\lambda}\int_{\lambda+C+2}^\infty e^{-y(\lambda-C-1)}\mathrm{d}y\notag\\
        &=c_AD_1(\lambda)\frac{e^{(C+1)(C+2)}}{\lambda-C-1}(1+|\gamma_0|)^{B_G}.
    \end{align*}
    Finally, we obtain an upper bound for $\sup_H |r_{\lambda}(H)|$ in \eqref{mainuidentity}. We do this similarly as with the previous bounds \eqref{rlamfirst}--\eqref{rlambdafinal} for $r_{\lambda}(H)$, but move the line of integration to $(0)$ to avoid any $H$ dependence. More precisely, if we move the line of integration in the definition \eqref{rdef} of $r_{\lambda}(H)$ from the line $(3)$ to $(0)$, then we obtain
    \begin{align}\label{r0first}
            |r_{\lambda}(H)|&\leq\frac{1}{2\pi}\int_{-\infty}^\infty e^{-t^2/\lambda}\frac{|G(-1+it+i\gamma_0)|}{|1+\beta_0+it||1+it|^{B_G+2}}\mathrm{d}t\notag\\
            &\leq c_G\frac{e}{2\pi}\int_{-\infty}^\infty\frac{\max(1,|t+\gamma_0|^{B_G})}{|1+\beta_0+it||1+it|^{B_G+2}}\mathrm{d}t\notag\\
            &\leq c_G\frac{e(1+|\gamma_0|)^{B_G}}{2\pi}\int_{-\infty}^\infty\frac{\max(1,|t|^{B_G})}{|1+\beta_0+it||1+it|^{B_G+2}}\mathrm{d}t.
    \end{align}
    Then, similar to before, we bound the integral in \eqref{r0first} by noting that
    \begin{equation*}
        \int_0^1\frac{1}{|1+\beta_0+it||1+it|^{B_G+2}}\mathrm{d}t\leq\int_0^1\frac{1}{1+\beta_0}\mathrm{d}t=\frac{1}{(1+\beta_0)},
    \end{equation*}
    and
    \begin{equation*}
        \int_1^\infty\frac{t^{B_G}}{|1+\beta_0+it||1+it|^{B_G+2}}\mathrm{d}t\leq\int_1^\infty\frac{t^{B_G}}{(\sqrt{1+t^2})^{B_G+3}}\mathrm{d}t=\frac{1-2^{-B_G-1/2}}{B_G+1}
    \end{equation*}
    so that
    \begin{equation}\label{r0final}
        |r_{\lambda}(H)|\leq (1+|\gamma_0|)^{B_G}D_2,
    \end{equation}
    where $D_2$ is as in \eqref{D2def}. Substituting \eqref{Ufinalbound}, \eqref{rlambdafinal} and \eqref{r0final} back into \eqref{mainuidentity} then gives
    \begin{align*}
        \frac{1}{Y}\int_1^{e^{C+2}Y}|A(x)|\mathrm{d}x&\geq\frac{1}{\sup_H|r_{\lambda}(H)|}\left(|U|-\left|\frac{1}{Y}\int_{e^{C+2}Y}^\infty\frac{A(x)r_{\lambda}(\lambda-\log x)}{x^{i\gamma_0}}\mathrm{d}x\right|\right)\\
        &\geq\frac{1}{(1+|\gamma_0|)^{B_G}D_2}\left(\frac{Y^{\beta_0}|F(\rho_0)|}{(\beta_0+2)^{B_G+2}}-\mathcal{E}(Y)\right)-\frac{c_AD_1(\lambda)e^{(C+1)(C+2)}}{(\lambda-C-1)D_2}.
    \end{align*}
 Finally, performing the rescaling $Y\to Ye^{-(C+2)}=\widetilde{Y}$, or equivalently $\lambda\to\lambda-C-2$, then yields the desired result.
\end{proof}

\section{An explicit example using $M(x)$ and $\zeta(s)$}\label{nurse}
As discussed in the introduction, we shall connect the zeroes of $\zeta(s)$ to the mean value of the Mertens' function $M(x)= \sum_{n\leq x} \mu(n)$. Consider
\begin{equation}\label{MertDirichlet}
    \int_1^\infty\frac{M(x)}{x^{s+1}}\mathrm{d}x=\frac{s-1}{s(s-1)\zeta(s)}
\end{equation}
and take
\begin{equation}\label{MertFG}
    F(s)=s-1\quad\text{and}\quad G(s)=s(s-1)\zeta(s),
\end{equation}
which are both analytic functions.

\subsection{Bounds for $F(s)$ and $G(s)$}
To be able to apply Theorem \ref{mainthm} for $A(x)=M(x)$ via \eqref{MertDirichlet}, we require bounds of the form \eqref{FGbounds} for $F$ and $G$ as defined in \eqref{MertFG}. To do so, we first give a simple lemma.
\begin{Lem}\label{esiglem}
    Let $s=\sigma+it$. Then,
    \begin{equation}\label{sigmaid1}
        |\sigma+it|\leq\sqrt{2}\max(1,|\sigma|) \max(1,|t|),
    \end{equation}
    and for any $r\geq 0$,
    \begin{equation}\label{signaid2}
        \max(1,|\sigma|^r)\leq\max\left(1,\left(\frac{r}{e}\right)^r\right)e^{|\sigma|}.
    \end{equation}
\end{Lem}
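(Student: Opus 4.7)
Both inequalities in Lemma~\ref{esiglem} are elementary, so I will only sketch the approach. For \eqref{sigmaid1}, my plan is to start from the identity $|\sigma+it|^2=\sigma^2+t^2$ and use the trivial bounds $\sigma^2\leq\max(1,|\sigma|)^2$ and $t^2\leq\max(1,|t|)^2$. Since $\max(1,|\sigma|)\geq 1$ and $\max(1,|t|)\geq 1$, each summand is majorised by the product $\max(1,|\sigma|)^2\max(1,|t|)^2$, so their sum is at most twice this product. Taking square roots yields the $\sqrt{2}$ factor in \eqref{sigmaid1}.

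For \eqref{signaid2}, the idea is to study the real-variable function $h(x)=x^r e^{-x}$ on $[0,\infty)$. A direct calculation gives $h'(x)=x^{r-1}(r-x)e^{-x}$, so $h$ attains its global maximum at $x=r$ with value $(r/e)^r$. It follows that $x^r\leq (r/e)^r\, e^x$ for every $x\geq 0$, and setting $x=|\sigma|$ gives $|\sigma|^r\leq (r/e)^r e^{|\sigma|}$. Combining this with the trivial bound $1\leq e^{|\sigma|}$ produces $\max(1,|\sigma|^r)\leq\max(1,(r/e)^r)e^{|\sigma|}$, as required.

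Neither step presents any real obstacle; the only minor care needed is in reading \eqref{signaid2} at the edge cases $r=0$ (with the convention $0^0=1$, both sides collapse to $e^{|\sigma|}$) and $0<r<1$, where $(r/e)^r<1$ and the outer $\max$ with $1$ on the right-hand side is precisely what is needed to absorb the $1$ on the left when $|\sigma|<1$.
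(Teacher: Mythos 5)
Your proof is correct and follows essentially the same elementary approach as the paper; for \eqref{sigmaid1} the paper argues by cases on $|t|\lessgtr 1$ and then $|\sigma|\lessgtr 1$, whereas your direct bound $\sigma^2+t^2\leq 2\max(1,|\sigma|)^2\max(1,|t|)^2$ is a slightly cleaner route to the same inequality, and for \eqref{signaid2} the calculus argument matches the paper's. One small slip in your closing remark: at $r=0$ (with $0^0=1$) the left side of \eqref{signaid2} is $1$, not $e^{|\sigma|}$, though the inequality of course still holds.
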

\begin{proof}
    We begin with proving \eqref{sigmaid1}. Here, for $|t|\leq 1$, one has
    \begin{equation*}
        |\sigma+it|=\sqrt{\sigma^2+t^2}\leq \sqrt{\sigma^2+1}
    \end{equation*}
    and if $|t|>1$
    \begin{equation*}
        |\sigma+it|=\sqrt{\sigma^2+t^2}\leq|t|\sqrt{\sigma^2+1}
    \end{equation*}
    so that
    \begin{equation*}
        |\sigma+it|\leq\sqrt{\sigma^2+1}\:\max(1,|t|).
    \end{equation*}
    Now, when $|\sigma|\leq 1$, we have
    \begin{equation*}
        \sqrt{\sigma^2+1}\leq \sqrt{2}
    \end{equation*}
    and when $|\sigma|>1$,
    \begin{equation*}
        \sqrt{\sigma^2+1}\leq\sqrt{2}|\sigma|
    \end{equation*}
    so that
    \begin{equation*}
        \sqrt{\sigma^2+1}\leq\sqrt{2}\max(1,|\sigma|)
    \end{equation*}
    and \eqref{sigmaid1} follows. We now consider \eqref{signaid2}. Here, standard calculus arguments show that 
    \begin{equation*}
        \frac{|\sigma|^r}{e^{|\sigma|}}
    \end{equation*}
    is maximised at $|\sigma|=r$, and the desired result follows.
\end{proof}
From here, we have the following bound for $|F(s)|$. 
\begin{Lem}\label{Fzetalem}
    Let $F(s)=s-1$ with $\sigma\geq 0$. Then,
    \begin{equation*}
        |F(s)|\leq \sqrt{2}\max(1,|t|)e^{\sigma}.
    \end{equation*}
\end{Lem}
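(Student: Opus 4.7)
The target bound matches the general shape of the bound in \eqref{FGbounds} with $c_F = \sqrt{2}$, $B_F = 1$, so the plan is to apply Lemma \ref{esiglem} to the shifted variable $s-1 = (\sigma-1) + it$ and then absorb the resulting $\sigma$-dependence into $e^{\sigma}$.

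First I would write
\begin{equation*}
    |F(s)| = |(\sigma - 1) + it|
\end{equation*}
and apply \eqref{sigmaid1} with $\sigma$ replaced by $\sigma - 1$, obtaining
\begin{equation*}
    |F(s)| \leq \sqrt{2}\,\max(1,|\sigma-1|)\,\max(1,|t|).
\end{equation*}
The remaining task is then to show that $\max(1, |\sigma-1|) \leq e^{\sigma}$ whenever $\sigma \geq 0$, after which the lemma follows immediately.

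For this final inequality I would split into two elementary cases. If $0 \leq \sigma < 1$, then $|\sigma - 1| = 1 - \sigma \leq 1 \leq e^{\sigma}$, so $\max(1, |\sigma-1|) = 1 \leq e^{\sigma}$. If $\sigma \geq 1$, then $|\sigma - 1| = \sigma - 1 \leq e^{\sigma}$ (using the standard inequality $e^{\sigma} \geq 1 + \sigma$), and of course $1 \leq e^{\sigma}$, so once again $\max(1, |\sigma - 1|) \leq e^{\sigma}$.

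There is no real obstacle here; the only mild subtlety is that a naive application of \eqref{signaid2} with $r = 1$ gives $\max(1, |\sigma|) \leq e^{|\sigma|}$ on $|\sigma|$ rather than on $|\sigma - 1|$, which would produce a worse constant if combined with the triangle inequality $|s-1| \leq |s| + 1$. Applying \eqref{sigmaid1} directly to the shifted quantity $s - 1$, as above, avoids this loss and gives the clean constant $\sqrt{2}$ claimed in the statement.
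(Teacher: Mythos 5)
Your proof is correct and matches the paper's intent: the paper's own proof is the single line ``Follows immediately from Lemma~\ref{esiglem},'' and your argument is precisely the natural way to cash that out, applying \eqref{sigmaid1} to the shifted quantity $s-1$ and then checking $\max(1,|\sigma-1|)\le e^\sigma$ for $\sigma\ge 0$ by an elementary two-case argument. Your closing remark is also a fair observation: a literal application of \eqref{signaid2} to $|\sigma-1|$ gives $e^{|\sigma-1|}$, which exceeds $e^\sigma$ on $[0,1/2)$, so the direct case check you supply is genuinely what is needed to make the ``immediate'' deduction airtight.
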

\begin{proof}
    Follows immediately from Lemma \ref{esiglem}.
\end{proof}
We now give a bound for $|G(s)|$. Understandably, this requires more work and we will need to use the Phragm\'en--Lindel\"of theorem to get a result for all $\sigma\geq -1$.
\begin{Lem}\label{Gzetalem}
    Let $G(s)=s(s-1)\zeta(s)$. Then, for all $\sigma\geq -1$, one has
    \begin{equation*}
        |G(s)|\leq 13.38\max(1,|t|^{7/2})e^{|\sigma|}.
    \end{equation*}
\end{Lem}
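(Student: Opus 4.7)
The plan is to bound $|G(s)|$ explicitly on the two vertical lines $\sigma=2$ and $\sigma=-1$, apply Phragm\'{e}n--Lindel\"{o}f to interpolate across the strip $-1\le\sigma\le 2$, and then treat the half-plane $\sigma\ge 2$ directly. Note that $G(s)=s(s-1)\zeta(s)$ is entire (the factor $s-1$ removes the simple pole of $\zeta$ at $s=1$) and of polynomial growth in every vertical strip, so the convexity form of Phragm\'{e}n--Lindel\"{o}f applies without fuss.

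On $\sigma=2$, the Dirichlet series gives $|\zeta(s)|\le\zeta(2)=\pi^2/6$, and Lemma \ref{esiglem} yields $|s||s-1|\le 4\max(1,|t|^2)$. Hence $|G(2+it)|\le (2\pi^2/3)\max(1,|t|^2)\le (2\pi^2/3)\max(1,|t|^{7/2})$ with a modest explicit constant.

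For $\sigma=-1$, I invoke the functional equation
\[
\zeta(s)=2^{s}\pi^{s-1}\sin(\pi s/2)\Gamma(1-s)\zeta(1-s).
\]
Setting $s=-1+it$ one computes $|\sin(\pi s/2)|=\cosh(\pi t/2)$ and $|\zeta(1-s)|\le\zeta(2)$, while an explicit Stirling estimate yields $|\Gamma(2-it)|\ll|t|^{3/2}e^{-\pi|t|/2}$ for $|t|$ away from zero; the exponential exactly cancels the growth of $\cosh(\pi t/2)$, leaving $|\zeta(-1+it)|\ll|t|^{3/2}$. A neighbourhood of $t=0$ is handled by continuity since $G$ is entire. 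Combined with $|s||s-1|\le 4\max(1,|t|^2)$ from Lemma \ref{esiglem}, this produces $|G(-1+it)|\le C_{-}\max(1,|t|^{7/2})$ with an explicit $C_{-}$.

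Phragm\'{e}n--Lindel\"{o}f then interpolates across $-1\le\sigma\le 2$: with boundary exponents $7/2$ at $\sigma=-1$ and $2$ at $\sigma=2$, convexity gives $|G(s)|\le C_{\mathrm{strip}}\max(1,|t|^{L(\sigma)})\le C_{\mathrm{strip}}\max(1,|t|^{7/2})$, where $L(\sigma)=7/2-(\sigma+1)/2\le 7/2$ throughout. For $\sigma\ge 2$, part (ii) of Lemma \ref{esiglem} with $r=2$ gives $\sigma^2\le e^{\sigma}$, so $|s||s-1|\le 2e^{\sigma}\max(1,|t|^2)$ and $|\zeta(s)|\le\zeta(2)$; this delivers $|G(s)|\le(2\pi^2/3)e^{\sigma}\max(1,|t|^{7/2})$. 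Since $e^{|\sigma|}\ge 1$, merging the strip bound with this half-plane bound yields $|G(s)|\le C\max(1,|t|^{7/2})e^{|\sigma|}$ for all $\sigma\ge -1$. The main obstacle is quantitative: producing sharp enough explicit constants in the Stirling estimate for $\Gamma(2-it)$, and tracking the Phragm\'{e}n--Lindel\"{o}f interpolation tightly enough so that the aggregate constant ends up at or below $13.38$.
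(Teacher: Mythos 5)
Your plan is sound and genuinely different from the paper's argument in three respects, so let me compare.

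First, the paper does not interpolate directly across $[-1,2]$. It instead bounds $G$ on the critical line $\sigma=1/2$ using explicit subconvexity estimates for $\zeta(1/2+it)$ from Hiary and Hiary--Patel--Yang (giving $|G(1/2+it)|\le 2.53|1/2+it|^{9/4}$), then applies Phragm\'en--Lindel\"of twice: once between $\sigma=1/2$ and an arbitrary $\sigma_0\geq 2$ (covering all $\sigma\geq 1/2$), and once between $\sigma=-1$ and $\sigma=1/2$. Your one-strip interpolation over $[-1,2]$ combined with a direct elementary treatment of $\sigma\geq 2$ (via $\sigma^2\le e^\sigma$) is simpler and dispenses entirely with subconvexity bounds for $\zeta$ on the critical line, which is a real saving. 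Both routes yield the same final $t$-exponent $7/2$, since the worst exponent in either scheme occurs at $\sigma=-1$.

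Second, the paper bounds $|G(-1+it)|$ via the functional equation and the explicit Gamma-ratio inequality of Bennett et al., landing on $|G(-1+it)|\le 0.6|{-1+it}|^{7/2}$. You propose Stirling for $|\Gamma(2-it)|$. I'd suggest instead using the exact reflection identity $|\Gamma(1\pm it)|^2=\pi t/\sinh(\pi t)$ together with $\Gamma(2-it)=(1-it)\Gamma(1-it)$: this is an equality, not an asymptotic, so there is no ``$\ll$'' to make explicit and no need to ``handle a neighbourhood of $t=0$ by continuity.'' With it one gets $\cosh(\pi t/2)|\Gamma(2-it)|=|1-it|\sqrt{\frac{\pi|t|}{2}\coth(\frac{\pi|t|}{2})}$, and $\frac{\pi|t|}{2}\coth(\frac{\pi|t|}{2})\le\frac{\pi}{2}\sqrt{1+t^2}$, leading to $|G(-1+it)|\lesssim 0.21|{-1+it}|^{7/2}$ --- actually sharper than the paper's $0.6$.

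Third, your worry about whether the aggregate constant stays at or below $13.38$ is misplaced. In fact your route should give a comfortably smaller constant. On $\sigma=2$ one has $|G(2+it)|\le\zeta(2)|2+it|^2$, and interpolating between this and the $\sigma=-1$ bound gives, after converting $|\sigma+it|^{L(\sigma)}$ with Lemma \ref{esiglem}, a constant maximised at $\sigma=2$ of roughly $\zeta(2)\cdot 2\approx 3.3$ --- and for $\sigma\in[-1,2]$ the factor $\max(1,|\sigma|^{L(\sigma)})\le e^{|\sigma|}$ holds outright since $L(\sigma)\le 7/2$ and $|\sigma|\le 2$ (so $(L(\sigma)/e)^{L(\sigma)}\le 1$), with no extra $(7/(2e))^{7/2}$ penalty. (Incidentally, the paper's $(7/(2e))^{7/2}\approx 2.42$ factor in \eqref{gs112bound} is actually superfluous there too, since on $[-1,1/2]$ one has $|\sigma|\le 1$; the paper's constant could be brought down to about $9.1$.) So with the bookkeeping done honestly, your approach is correct, more elementary, and would in fact beat $13.38$.

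Two minor points to tighten in a written-out version: state which form of Phragm\'en--Lindel\"of you are using (the boundary data are polynomial in $|s|$, not in $\max(1,|t|)$, so one should carry $|s|$-bounds through the interpolation and only convert to $\max(1,|t|)e^{|\sigma|}$ at the end, as the paper does via Lemma \ref{esiglem}); and Lemma \ref{esiglem} is stated as two displayed inequalities, not as parts (i) and (ii), so the reference should be to \eqref{signaid2}.
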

\begin{proof}
    We begin by bounding $G(s)$ on the half-line. We combine the bounds
    \begin{align*}
        \left|\zeta\left(\frac{1}{2}+it\right)\right|&\leq 1.461,\qquad (0\leq t\leq 3)\\
        \left|\zeta\left(\frac{1}{2}+it\right)\right|&\leq 0.595 t^{1/6}\log t,\qquad (3\leq t<200)\\
        \left|\zeta\left(\frac{1}{2}+it\right)\right|&\leq \frac{4}{(2\pi)^{1/4}}t^{1/4}, \qquad (t\geq 200)
    \end{align*}
    given in \cite{Hiary2016,HPY2024} to deduce that
    \begin{equation*}
        \left|\zeta\left(\frac{1}{2}+it\right)\right|\leq 2.53t^{1/4}\leq 2.53\left|\frac{1}{2}+it\right|^{1/4}
    \end{equation*}
    for all $t\geq 0$, and by extension
    \begin{equation}\label{g12tbound}
        \left|G\left(\frac{1}{2}+it\right)\right|\leq2.53\left|\frac{1}{2}+it\right|^{9/4}.
    \end{equation}
    With a view to applying a convexity estimate, we next bound the $G(s)$ on the $\sigma_0$-line, for some arbitrary $\sigma_0\geq 2$. Here, one simply has
    \begin{equation*}
        |G(\sigma_0+it)|\leq\zeta(\sigma_0)\left|\sigma_0+it\right|^2,
    \end{equation*}
    noting that $|\zeta(\sigma_0+it)|\leq \zeta(\sigma_0)$ whenever $\sigma_{0}>1$. Using the Phragm\'en--Lindel\"of theorem (see \cite[Theorem 22]{Rad1959}) and \eqref{sigmaid1} of Lemma \ref{esiglem} one thus has
    \begin{align}\label{Phrageq1}
        |G(\sigma+it)|&\leq 2.53^{\frac{\sigma_0-\sigma}{\sigma_0-1/2}}|\sigma+it|^{\frac{9}{4}\frac{\sigma_0-\sigma}{\sigma_0-1/2}}\zeta(\sigma_0)^{\frac{\sigma-1/2}{\sigma_0-1/2}}|\sigma+it|^{\frac{2\sigma-1}{\sigma_0-1/2}}\notag\\
        &\leq 2.53^{\frac{\sigma_0-\sigma}{\sigma_0-1/2}}\zeta(\sigma_0)^{\frac{\sigma-1/2}{\sigma_0-1/2}}\left(\sqrt{2}\max(1,|\sigma|) \max(1,|t|)\right)^{\frac{1}{\sigma_0-1/2}\left(\frac{9}{4}(\sigma_0-\sigma)+2\sigma-1\right)}
    \end{align}
    for all $1/2\leq\sigma\leq \sigma_0$. Since $\sigma_0\geq 2$, we have $\zeta(\sigma_0)\leq\zeta(2)=\pi^2/6$ and therefore
    \begin{align}\label{tara}
        2.53^{\frac{\sigma_0-\sigma}{\sigma_0-1/2}}\zeta(\sigma_0)^{\frac{\sigma-1/2}{\sigma_0-1/2}}2^{\frac{0.5}{\sigma_0-1/2}\left(\frac{9}{4}(\sigma_0-\sigma)+2\sigma-1\right)}&\leq 2.53\cdot 2^{9/8}\\
        &\leq 5.52,\notag
    \end{align}
    with the upper bound in \eqref{tara} attained $\sigma=1/2$. For the other factor in \eqref{Phrageq1}, the maximum is also attained at $\sigma=1/2$ with
    \begin{align}\label{sigiteq1}
        \left(\max(1,|\sigma|) \max(1,|t|)\right)^{\frac{1}{\sigma_0-1/2}\left(\frac{9}{4}(\sigma_0-\sigma)+2\sigma-1\right)}&\leq\max(1,|\sigma|^{9/4}|)\max(1,|t|^{9/4})\notag\\
        &\leq\max(1,|t|^{9/4})e^{|\sigma|},
    \end{align}  
    where in the last step we applied \eqref{signaid2} of Lemma \ref{esiglem}. Inserting (\ref{tara}) and (\ref{sigiteq1}) into \eqref{Phrageq1} gives
    \begin{equation}\label{G0bound}
        |G(\sigma+it)|\leq 5.52\max(1,|t|^{9/4})e^{|\sigma|}.
    \end{equation}
    Since $\sigma_0\geq 2$ was arbitrary, this bound holds for all $\sigma\geq 1/2$. It thus remains to consider the case where $-1\leq\sigma\leq 1/2$. For this case, we use the functional equation (\cite[Chapter 2.1]{Titchmarsh})
    \begin{equation*}
        \xi(s)=\xi(1-s),
    \end{equation*}
    where
    \begin{equation*}
        \xi(s)=\frac{1}{2}s(s-1)\pi^{-\frac{1}{2}s}\:\Gamma\left(\frac{1}{2}s\right)\zeta(s)=\frac{1}{2}\pi^{-\frac{1}{2}s}\:\Gamma\left(\frac{1}{2}s\right)G(s).
    \end{equation*}
    This implies that
    \begin{equation*}
        G(s)=\frac{\pi^s}{\sqrt{\pi}}\frac{\Gamma\left(\frac{1}{2}-\frac{1}{2}s\right)}{\Gamma\left(\frac{1}{2}s\right)}G(1-s)
    \end{equation*}
    and hence
    \begin{align*}
        |G(-1+it)|&\leq\frac{1}{\pi\sqrt{\pi}}\left|\frac{\Gamma\left(1-\frac{it}{2}\right)}{\Gamma\left(-\frac{1}{2}+\frac{it}{2}\right)}\right|G(2)\\
        &\leq\frac{1}{\pi\sqrt{\pi}}\left|\frac{\Gamma\left(1-\frac{it}{2}\right)}{\Gamma\left(-\frac{1}{2}+\frac{it}{2}\right)}\right||1+it||2+it|\:\zeta(2).
    \end{align*}
    To bound the ratio of Gamma functions, we use the general identity
    \begin{equation}\label{Gammabound}
        \left|\frac{\Gamma\left(\frac{1-s}{2}\right)}{\Gamma\left(\frac{s}{2}\right)}\right|\leq\frac{1}{2^{1/2-\sigma}}|s-[\sigma]+1|^{1/2+[\sigma]-\sigma}\prod_{j=1}^{-[\sigma]}|s+j-1|,\qquad(\sigma<0),
    \end{equation}
    proved in  \cite[pp.\ 1469--1470]{Bennettetal}, where $[\sigma]$ denotes the closest integer to $\sigma$. From \eqref{Gammabound},
    \begin{equation*}
        \left|\frac{\Gamma\left(1-\frac{it}{2}\right)}{\Gamma\left(-\frac{1}{2}+\frac{it}{2}\right)}\right|\leq\frac{1}{2^{3/2}}|1+it|^{3/2}
    \end{equation*}
    and thus
    \begin{align}\label{gm1tbound}
        |G(-1+it)|\leq\frac{1}{\pi\sqrt{\pi}}|-1+it|^{5/2}|2+it|\:\zeta(2)&\leq\frac{1}{\pi\sqrt{\pi}}\cdot\frac{6}{\pi^2}\cdot 2\:|-1+it|^{7/2}\notag\\
        &\leq 0.6\:|-1+it|^{7/2}.
    \end{align}
    As before, we can now apply the Phragm\'en--Lindel\"of theorem with \eqref{g12tbound} and \eqref{gm1tbound} to obtain
    \begin{align*}
        |G(\sigma+it)|&\leq 0.6^{\frac{2}{3}(\frac{1}{2}-\sigma)}|\sigma+it|^{\frac{7}{3}(\frac{1}{2}-\sigma)}2.53^{\frac{2}{3}(\sigma+1)}|\sigma+it|^{\frac{3}{2}(\sigma+1)}\\
        &\leq 0.6^{\frac{2}{3}(\frac{1}{2}-\sigma)}2.53^{\frac{2}{3}(\sigma+1)}\left(\sqrt{2}\max(1,|\sigma|) \max(1,|t|)\right)^{\frac{7}{3}(\frac{1}{2}-\sigma)+\frac{3}{2}(\sigma+1)}
    \end{align*}
    for $-1\leq \sigma\leq 1/2$. Hence
    \begin{align}\label{tara2}
        |G(\sigma+it)|\leq 0.6^{\frac{2}{3}(\frac{1}{2}-\sigma)}2.53^{\frac{2}{3}(\sigma+1)}(\sqrt{2})^{\frac{7}{3}(\frac{1}{2}-\sigma)+\frac{3}{2}(\sigma+1)}&\leq 2.53(\sqrt{2})^{9/4}\leq 5.52,
    \end{align}
    with the upper bound in \eqref{tara2} attained at $\sigma=1/2$. Then, by Lemma 1,
    \begin{equation*}
        \left(\max(1,|\sigma|) \max(1,|t|)\right)^{\frac{7}{3}(\frac{1}{2}-\sigma)+\frac{3}{2}(\sigma+1)}\leq\left(\frac{7}{2e}\right)^{7/2}\max(1,|t|^{7/2})e^{|\sigma|}.
    \end{equation*}
    Therefore,
    \begin{equation}\label{gs112bound}
        |G(\sigma+it)|\leq 5.52\left(\frac{7}{2e}\right)^{7/2}\max(1,|t|^{7/2})e^{|\sigma|}\leq 13.38\max(1,|t|^{7/2})e^{|\sigma|}
    \end{equation}
    for $-1\leq \sigma\leq 1/2$. By \eqref{G0bound}, we  have that \eqref{gs112bound} also holds for $\sigma\geq 1/2$, proving the lemma.
\end{proof}

\subsection{Example applications with $M(x)$}

Assuming the Riemann hypothesis, one has $M(x)=O(x^{1/2+\epsilon})$ for any $\epsilon>0$. A well-studied conjecture of Mertens is that $|M(x)|\leq \sqrt{x}$ for all $x$. This is known to be false; Pintz \cite{Mertens} gave an explicit bound on the first counterexample. Nevertheless, for all computed values of $x$ we do indeed have $|M(x)|\leq \sqrt{x}$. Indeed, Hurst \cite{Hurst} showed that 
\begin{equation}\label{Hursteq}
    |M(x)|\leq 0.571 \sqrt{x}, \qquad (2\leq x \leq 10^{16}).
\end{equation}
Making use of Lemmas \ref{Fzetalem} and \ref{Gzetalem}, we can take $c_A=1$, $C=1$, $c_F=\sqrt{2}$, $B_F=1$, $c_G=13.38$ and $B_G=7/2$ in Theorem \ref{mainthm} for $A(x)=M(x)$. Currently, it is known that the first counterexample to Mertens' conjecture occurs for some $x\leq \exp(1.96\cdot 10^{19})$, see \cite{KimNguyen}. Suppose that this bound was near optimal and we had $|M(x)|\leq \sqrt{x}$ for all $x\leq Y = \exp(10^{19})$. In this case, we would have that
\begin{equation}\label{Mertbound1}
    \frac{1}{Y}\int_1^Y|M(x)|\mathrm{d}x\leq\frac{2}{3}\sqrt{Y}=(0.666\ldots)\times\exp\left(\frac{10^{19}}{2}\right).
\end{equation}
Using Theorem \ref{mainthm} with $c_0=0.1$, \eqref{Mertbound1} implies that there must be no simple zeroes of $\zeta(s)$ with real part greater than $\beta_0=0.51$ and imaginary part less than $\gamma_0=\exp(10^{16})$! This is a truly large amount of non-trivial information about the zeroes of $\zeta(s)$, and is indicative of the current futility of trying to precisely find the first counterexample to Mertens' conjecture. 

As a less exaggerated example, say we only knew that Mertens' conjecture held for $x\leq Y=10^{80}$. Then, by the same argument, Theorem \ref{mainthm} yields that there are no simple zeroes of $\zeta(s)$ with real part greater than $\beta_0=0.99$ and imaginary part less than $\gamma_0=10^{13}$. If we may be so bold: this is not completely terrible! In particular, the Riemann hypothesis is only known to hold for imaginary height $|\gamma|\leq 3\cdot 10^{12}$ \cite{PTRH}, and for larger heights less information is known via zero-free regions \cite{MTY}. However, the main problem is that such a large-scale computation on $M(x)$ seems infeasible. If we instead use \eqref{Hursteq}, we get no simple zeroes with real part greater than $\beta_0=0.99$ and imaginary part less than $\gamma_0=5$. This \textit{really is} terrible, and suggests that at present this method is not useful for bounding the real part of simple zeroes with large imaginary part --- cf.\ our remarks in \S \ref{leubald}.

Looking at asymptotics, we find that this approach (with $B_G=7/2$) is able to give non-trivial information about the zeroes $\rho=\beta+i\gamma$ of $\zeta(s)$ for approximately
    $|\gamma|\lesssim Y^{1/5}$,
which is roughly illustrated in our above examples for large $Y$. This is notably worse than if one goes in the other direction. From \cite{DJ} one has that verifying the Riemann hypothesis up to
imaginary height $T$ gives good bounds on prime counting functions up to height $\approx T^2$. Thus, this reverse method is essentially worse by a power of 10. However, it is often easier to compute bounds on arithmetic functions instead of computing zeroes of $L$-functions. Thus, this method may still have teeth when one is interested in low-lying zeroes of more complicated $L$-functions. We also wish to emphasise that our approach is essentially the first of its kind, and so one would hope that further improvements may be developed in future work that are more computationally reasonable.

As a final example, we remark that our Theorem \ref{mainthm} gives a general way of producing explicit lower bounds for the mean values of an arithmetic function. For example, continuing on the above example with $M(x)$, we find that 
\begin{equation}\label{Mertenmeanroot}
    \frac{1}{Y}\int_1^Y|M(x)|\mathrm{d}x\geq (7\cdot 10^{-9})\sqrt{Y}, \quad Y\geq 10^{20},
\end{equation}
by applying Theorem \ref{mainthm} with $\beta_0=1/2$, $c_0=0.1$ and $\gamma_0=14.1347\ldots$ (the location of the first non-trivial zero of $\zeta(s)$). We remark that the inequality \eqref{Mertenmeanroot} is similar to, albeit slightly weaker, than that in Pintz's paper \cite{Pintz82}.

\begin{remark}
    Since the time of writing, S.~Kim has informed us of a result from Pintz's 1987 paper~\cite[Theorem~A]{Mertens}, see also~\cite[Theorem~C]{saout13}. Here, in the context of Mertens' function $M(x)$, it is shown that our bound in Theorem~\ref{mainthm} can be replaced with 
    \begin{equation}\label{saouteq}
        \frac{1}{Y}\int_0^Y|M(x)|\mathrm{d}x>\frac{Y^{\beta_0}}{|\gamma_0|^5},\qquad Y\geq|\gamma_0|^5
    \end{equation}
    for a zeta zero $\beta_0+i\gamma_0$ off the half-line. That is, in the case where $\beta_0>1/2$. The inequality \eqref{saouteq} is not as sharp as ours when $|\gamma_0|$ is large, but has the advantage of holding for all zeros of $\zeta(s)$ off the half-line, with no assumption on simplicity.
\end{remark}

\section{Conclusion and extensions}\label{gws}
In this paper, we have given an explicit version of a result due to Pintz~\cite{Pintz22}, linking mean-values of arithmetical functions with information on the zeroes of $L$-functions. In Section \ref{nurse}, the application of this approach to $M(x)$ and $\zeta(s)$ is thoroughly explored. While we could have detailed explicitly our analysis for other $L$-functions here, we refrained from doing so as this would substantially increase the length of this paper. In particular, to apply our technique to the zeroes of a function $L(s)$, one typically requires explicit bounds for $|L(1/2+it)|$. However, it is rare to find such bounds in the literature, whereas such bounds for $\zeta(s)$ are abundant, including those from \cite{Hiary2016,HPY2024} which we used in our analysis. Similarly, we are not aware of any existing large-scale computations for arithmetic functions such as
\begin{equation*}
    M(x,\chi)=\sum_{n\leq x}\chi(n)\mu(n),
\end{equation*}
which is the analogue of $M(x)$ for a Dirichlet character $\chi$.

We briefly mention some applications to other simple arithmetical functions, which could have easily been used in place of our choice of $M(x)$ in Section \ref{nurse}. The result in \cite[(2.7)]{Pintz22} looks at partial sums of the Liouville function $\lambda(n)$, numerical computations for which have been made in \cite{MosTru,MT2017,Dickens,MTIJNT}.
The result in \cite[(2.9)]{Pintz22} concerns the distribution of $k$-free numbers. This builds on Pintz's earlier work \cite{Pintz83}; numerical computations exist in \cite{MOST}. 

We conclude by noting that, throughout this article, we have used pointwise bounds such as (\ref{study}) in integrals such as (\ref{kitchen}). One should really proceed directly and aim at verifying that the integral is small. For example, in \cite[p.\ 1021]{Hurst} it is shown that $|M(x)|$ is often substantially less than the global bound obtained of $0.571x^{1/2}$. Computing the integral of the error term would lead to a small saving, which would potentially be useful in more general settings.

\section*{Acknowledgements}
We are grateful to Roger Heath-Brown, Seungki Kim, Greg Martin and the reviewer for their comments. DRJ is grateful to the Max Planck Institute for Mathematics in Bonn for its hospitality and financial support.

 \end{document}